\theoremstyle{theorem}
\newtheorem{theorem}{Theorem}[section]
\newtheorem*{thm*}{Theorem}
\newtheorem{proposition}[theorem]{Proposition}
\numberwithin{equation}{section}
\theoremstyle{definition}
\newcommand{\cp}{\mathbb{C}P}
\newcommand{\zz}{\mathbb{Z}}
\begin{document}

\title{Moduli spaces of $6$ and $7$-dimensional complete intersections}
\author{Jianbo Wang}
\address{Department of Mathematics, School of Science, Tianjin University\newline
\indent Weijin Road 92, Nankai District, Tianjin  300072, P.R.China}
\email{wjianbo@tju.edu.cn}

\thanks{The author is supported by NSFC grant No.11001195 and Beiyang Elite Scholar Program of Tianjin University, No.60301016.}
\begin{abstract}
This paper proves the existence of homeomorphic (diffeomorphic) complex $6$-dimensional ($7$-dim) complete intersections that belong to components of the moduli space of different dimensions. These results are given as a supplement to earlier result on $5$-dimensional complete intersections.
\end{abstract}

\maketitle

\section{Introduction}

Let $X_n(\underline{d})\subset \cp^{n+r}$ be a smooth complete
intersection of  multidegree $\underline{d}=(d_1,\dots,d_r)$. The product $d_1d_2\cdots d_r$
is called the total degree, denoted by $d$.

Libgober and Wood (\cite{LW86}) show the existence of homeomorphic complete intersections of dimension $2$ and diffeomorphic ones of dimension $3$ which belong to components of the moduli space having different dimensions. P. Br{\"u}ckmann (\cite{B,B00}) shows that there are families of arbitrary length $k$ of complete intersections in $\cp^{4k-2}$ and $\cp^{6k-2}$ (resp. $\cp^{5k-2}$) consisting of homeomorphic complete intersections of dimension $2$ and $4$ (resp. diffeomorphic ones of dimension $3$) but that belong to components of the moduli space of different dimensions. By the help of Theorem 1.1 in \cite{FW}, the author generalized the complex dimension of complete intersections to dimension $5$ (\cite{Wang}).

The goal of this paper is to give the following theorem, which is a supplement to the results of Br{\"u}ckmann \cite{B,B00} and the author \cite{Wang}.
\begin{theorem}
For each integer $k>1$, there exist $k$ homeomorphic (diffeomorphic) complex $6$-dimensional $(7-dim)$ complete intersections in $\cp^{8k-2}~  (\cp^{15k-8})$ belonging in the moduli space to components with different dimensions.\end{theorem}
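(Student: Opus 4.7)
My plan follows the strategy of Br\"uckmann \cite{B,B00} refined by the author for complex dimension $5$ in \cite{Wang}, extending it to complex dimensions $6$ and $7$. The proof splits naturally into three steps: a topological classification step, a multidegree construction step, and a moduli-dimension count.

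Step 1 (classification). First I would invoke an appropriate classification theorem reducing the homeomorphism (resp. diffeomorphism) question for the relevant complete intersections to the matching of a finite list of numerical invariants. For $X_6(\underline{d})$ the invariants should be the total degree $d$ together with the Chern (equivalently Pontryagin) numbers; for $X_7(\underline{d})$ one expects an analogue in dimension $7$ of Theorem~1.1 of \cite{FW} already used for dimension $5$ in \cite{Wang}, possibly complemented by a Kreck-type secondary invariant in the smooth case. Since the Chern classes of a complete intersection are explicit polynomials in $\underline{d}$ through
\[
c(X_n(\underline{d})) = \left.\frac{(1+h)^{n+r+1}}{\prod_{i=1}^{r}(1+d_i h)}\right|_{X},
\]
the topological problem is reduced to polynomial-matching in the $d_i$'s.

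Step 2 (constructing the multidegrees). I would build $k$ multidegrees $\underline{e}_1,\ldots,\underline{e}_k$ of strictly increasing lengths $r_1<r_2<\cdots<r_k$, with $r_k=8k-8$ for $n=6$ (resp. $r_k=15k-15$ for $n=7$), so that the varieties $X_n(\underline{e}_j)\subset\cp^{n+r_j}$ are pairwise homeomorphic (resp. diffeomorphic). The elementary move is a Br\"uckmann-style splitting that replaces a factor $d$ by a pair $(a,b)$ with $ab=d$, chosen so that every Chern number is preserved. Iterating such splittings from a seed multidegree produces the family. Padding each $\underline{e}_j$ with $r_k-r_j$ trivial linear equations produces $k$ multidegrees $\underline{d}_j$ of common length $r_k$, with $X_n(\underline{d}_j)\cong X_n(\underline{e}_j)$ as abstract varieties sitting inside $\cp^{8k-2}$ (resp. $\cp^{15k-8}$).

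Step 3 (moduli dimensions). Finally I would compute the dimension of each moduli component via the standard formula
\[
\dim M(\underline{d}) = \sum_{i=1}^{r}\binom{n+r+d_i}{d_i} - (n+r+1)^2 + \text{(rescaling and reordering corrections)}
\]
and verify that $\dim M(\underline{d}_1),\ldots,\dim M(\underline{d}_k)$ are pairwise distinct. A factor $d_i=1$ contributes only $n+r+1$ to the binomial sum, whereas an entry $d_i=d\ge 2$ contributes the strictly larger $\binom{n+r+d}{d}$; consequently the different numbers of $1$-entries arising from the different lengths $r_j$ in Step 2 force distinct moduli dimensions. The main obstacle is Step 2: complex dimension $6$ has genuinely many Chern numbers to preserve, and complex dimension $7$ is further complicated by the delicacy of the smooth classification, so the heart of the argument is the arithmetic construction of splitting factors---typically assembled from a small set of primes---making all characteristic-number identities hold simultaneously across the $k$ multidegrees.
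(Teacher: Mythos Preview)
Your Step 1 is right and matches the paper: the classification input is \cite[Theorem 1.1]{FW}, with the diffeomorphism upgrade in dimension $7$ coming from Traving/Kreck once the total degree satisfies the relevant divisibility condition. But Step 2 contains a genuine gap. The move you describe---replacing an entry $d$ by a pair $(a,b)$ with $ab=d$---cannot preserve the required invariants. By \cite[\S 7]{LW82} the Pontrjagin numbers and Euler characteristic of $X_n(\underline{d})$ depend only on the total degree and the power sums $s_1,\ldots,s_n$; a single split keeps the product but changes $s_j$ by $a^j+b^j-d^j$, and asking this to vanish for $j=1$ and $j=2$ already forces $a+b=d$ and $a^2+b^2=d^2$, hence $ab=0$. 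No nontrivial split exists, so iterating cannot produce your family. (Br\"uckmann's construction is not of this type either; his families also live in a fixed $\cp^N$ with all multidegrees of equal length.) Your Step 3 then also fails: padding with degree-$1$ hyperplanes leaves the abstract variety, and hence its moduli dimension, unchanged, and the dimension formula in any case requires $d_i\ge 2$.

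The paper's construction is different in kind. One first finds, by explicit search, two multidegrees $\underline{d},\underline{d}'$ of the \emph{same} length ($8$ for $n=6$, $15$ for $n=7$) with equal product and equal $s_1,\ldots,s_n$ but unequal moduli dimensions---an instance of the ``equal products and equal sums of like powers'' problem. For each $s\ge 1$ the concatenations $d_{\lambda,\mu}=(\underbrace{\underline{d},\ldots,\underline{d}}_{\lambda},\underbrace{\underline{d}',\ldots,\underline{d}'}_{\mu})$ with $\lambda+\mu=s$ then all share the same length, product, and power sums, hence give homeomorphic (for $n=7$, diffeomorphic) complete intersections in $\cp^{8s+6}$ (resp.\ $\cp^{15s+7}$). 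A computer verification of $m(d_{\lambda+1,s-\lambda-1})>m(d_{\lambda,s-\lambda})$ for every $\lambda$ yields $k=s+1$ pairwise distinct moduli dimensions. The arithmetic heart is thus the seed pair $(\underline{d},\underline{d}')$, not a splitting identity.
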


\noindent {\bf Acknowledgement.}  The author would like to thank Guo Xianqiang for his warm email conversation.

\section{Moduli spaces of complex $6$ and $7$ dimensional complete intersections}

Let $X_n(\underline{d})\subset \cp^N$, where $n\geqslant 2, ~\underline{d}=(d_1,\dots,d_r), d_i\geqslant 2$ and $r=N-n$. As usual, define the power sums
$\displaystyle s_i=\sum_{j=1}^{r}d_j^i$ for $1\leqslant i\leqslant n$, then the Pontrjagin numbers and Euler characteristic of $X_n(\underline{d})$ depend only on the dimension $n$, total degree $d$ and power sums $s_i$(\cite[$\S 7$]{LW82}).
Assume that $X_n(\underline{d})$ is not a {\bf K}$3$-surface or a quadratic hypersurface, then from \cite[Lemma 3]{B},
the explicit formula for moduli space dimension is
\begin{align}\label{dimM}
 m(\underline{d})\coloneqq  & ~m(X_n(\underline{d}))= 1-(N+1)^2+\sum_{i=1}^r\binom{N+d_i}{N} \\
& +\sum_{i=1}^r\sum_{j=1}^r(-1)^j\sum_{1\leqslant k_1<\cdots<k_j\leqslant r}\binom{N+d_i-d_{k_1}-\cdots-d_{k_{j}}}{N}.\nonumber
\end{align}
Where $\displaystyle\binom{m}{N}=0$ for $m<N(m\in \zz)$.

\begin{theorem}\label{main1}
For each integer $k>1$, there exist $k$ homeomorphic complex $6$-dimensional complete intersections in $\cp^{8k-2}$ belonging in the moduli space to components with different dimensions.
\end{theorem}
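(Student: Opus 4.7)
The plan is to adapt the Prouhet--Tarry--Escott (PTE) strategy used by Br{\"u}ckmann \cite{B,B00} and in \cite{Wang} to dimension $n=6$. First I would exhibit two distinct multisets $A,B$ of integers $\geqslant 2$, each of cardinality $8$, satisfying $\sum_{a\in A}a^t=\sum_{b\in B}b^t$ for $t=1,2,\dots,6$; such a near-ideal PTE pair of length $8$ and degree $6$ is classical and can be made explicit. Then for each $i=0,1,\dots,k-1$ I would form the multidegree
\[
\underline{d}^{(i)}=\underbrace{A\sqcup\cdots\sqcup A}_{k-1-i\text{ copies}}\,\sqcup\,\underbrace{B\sqcup\cdots\sqcup B}_{i\text{ copies}},
\]
a tuple of length $r=8(k-1)$ whose total degree $d$ is independent of $i$. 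These define $k$ smooth complete intersections $X_6(\underline{d}^{(i)})\subset\cp^{8k-2}$, pairwise distinct as multisets because $A\ne B$.

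Next, I would show that the $X_6(\underline{d}^{(i)})$ are pairwise homeomorphic. By construction every $\underline{d}^{(i)}$ has the same total degree and the same power sums $s_1,\dots,s_6$, so by the Libgober--Wood recollection opening \S 2 all Pontrjagin numbers and the Euler characteristic coincide across $i$. Invoking the homeomorphism classification of $6$-dimensional complete intersections (from Theorem 1.1 of \cite{FW}, whose use for dimension $5$ is already described in the introduction) then yields the desired pairwise homeomorphisms.

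Finally one has to show that the moduli dimensions $m(\underline{d}^{(i)})$, as given by \eqref{dimM}, are pairwise distinct. Substituting, the single sum becomes
\[
\sum_{j=1}^{r}\binom{N+d^{(i)}_j}{N}=(k-1-i)\alpha+i\beta,\qquad\alpha=\sum_{a\in A}\binom{N+a}{N},\;\;\beta=\sum_{b\in B}\binom{N+b}{N},
\]
which is affine in $i$. The inclusion--exclusion double sum in \eqref{dimM} decomposes by how many of its $j+1$ indices fall in the $A$-blocks versus the $B$-blocks, each contribution being a polynomial in $i$; hence $m(\underline{d}^{(i)})=P(i)$ for some polynomial $P$. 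Because $\binom{N+d}{N}$ has degree $N=8k-2\geqslant 14$ in $d$, while $A$ and $B$ agree only on power sums up to degree $6$, the higher-power coefficients of $\binom{N+d}{N}$ generate a genuine discrepancy $\alpha\ne\beta$, so a judicious PTE choice makes $P$ non-constant and, with more care, strictly monotone for $i=0,\dots,k-1$.

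The main obstacle is this last step: the inclusion--exclusion sum in \eqref{dimM} is intricate, and one must rule out an accidental cancellation that would kill the affine $i$-dependence of the single sum. Addressing this requires either explicit evaluation with a concretely chosen PTE pair, or a structural argument that the top-degree-in-$d$ contribution to $\binom{N+d}{N}$ cannot be rewritten in $s_1,\dots,s_6$ alone. Br{\"u}ckmann's treatment of dimensions $2$ and $4$ in \cite{B,B00} provides the template, and transferring it to dimension $6$ with a length-$8$ PTE pair is where the computational weight of the proof lies.
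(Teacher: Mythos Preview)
Your overall strategy coincides with the paper's: take two $8$-tuples with matching power sums $s_1,\dots,s_6$, concatenate them in varying proportions, invoke \cite{FW} for the homeomorphisms, and then show the moduli dimensions differ. There is, however, a genuine gap at the first step. You assert that the total degree of $\underline{d}^{(i)}$ is independent of $i$; this requires $\prod_{a\in A}a=\prod_{b\in B}b$, which is \emph{not} a consequence of matching power sums $s_1,\dots,s_6$ for an $8$-element multiset (Newton's identities would need all of $s_1,\dots,s_8$). Since the total degree $d$ enters the Pontrjagin numbers and the Euler characteristic alongside the $s_i$, without equal products the homeomorphism conclusion via \cite{FW} does not follow. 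A generic PTE pair is therefore insufficient; one needs the rarer ``equal products \emph{and} equal power sums'' phenomenon. The paper handles this by importing an explicit such pair due to Chen Shuwen \cite{Chen}, in which the products agree and $s_1,\dots,s_6$ all match.

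On the monotonicity of $i\mapsto m(\underline{d}^{(i)})$, you correctly locate the difficulty, but the hoped-for structural argument ruling out cancellation is not what the paper does. Instead the paper observes that for the chosen pair one has $\max(\underline d\cup\underline d')<2\min(\underline d\cup\underline d')$, which forces the inclusion--exclusion in \eqref{dimM} to terminate at $j=1$; the difference $m(d_{\lambda+1,\mu-1})-m(d_{\lambda,\mu})$ then reduces to an explicit polynomial in $s$ and $\lambda$, and its positivity is verified by direct \emph{Mathematica} computation (Proposition~\ref{Pro-mis}). So the ``computational weight'' you anticipate is in fact the entire content of that step, carried out for one specific pair rather than abstractly.
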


\begin{proof}
Consider the following two multidegrees(\cite{Chen}\footnote{The two multidegrees are firstly known as a solution of "Equal Products and Equal Sums of Like Powers",
by Chen Shuwen in 2001.})
\begin{equation}\label{6dim}
\begin{split}
\underline{d}=  & (2323, 2241, 2231, 2117, 2079, 1957, 1953, 1899), \\
\underline{d}^\prime=  & (2321, 2263, 2187, 2163, 2037, 2001, 1919, 1909).
\end{split}
\end{equation}
They have the same total degree $d$, power sums $s_1,\dots, s_6$ as follows:
\begin{align*}
d & =\fnum{371008634983489635445991601},\\
s_1 & =\fnum{16800}, \\
s_2 & =\fnum{35449960}, \\
s_3 & =\fnum{75160663200}, \\
s_4 & =\fnum{160103709636808}, \\
s_5 & =\fnum{342612368928228000}, \\
s_6 & =\fnum{736443048260836419880}.
\end{align*}
The corresponding complete intersections have the same Pontrjagin numbers and Euler characteristic by \cite[$\S 7$]{LW82},
so the two complete intersections
\begin{align*}
& X_6(2323, 2241, 2231, 2117, 2079, 1957, 1953, 1899), \\
& X_6(2321, 2263, 2187, 2163, 2037, 2001, 1919, 1909)
\end{align*}
are homeomorphic by \cite[Theorem 1.1]{FW}, but have different moduli space dimensions:
\begin{align*}
m(\underline{d}) & = \fnum{4639611966677972182663146217041064938},\\
m(\underline{d}^\prime) & = \fnum{4639610187986885926979324513081980800}.
\end{align*}
Using the method from \cite{LW86}, define the composed multidegree
\begin{equation}\label{ComposedMuldegree}
d_{\lambda,\mu}=(\underbrace{\underline{d},\dots,\underline{d}}_{\lambda},
\underbrace{\underline{d}^\prime,\dots,\underline{d}^\prime}_{\mu}), \lambda+\mu=s\geqslant 1 .
\end{equation}
The composed multidegrees $d_{0,s},d_{1,s-1},\dots,d_{s,0}$ have the same power sums $s_1,s_2,\dots,s_6$ respectively, so the corresponding complete intersections $X_6(d_{0,s}), X_6(d_{1,s-1}), \dots, X_6(d_{s,0})$ are homeomorphic to each other.
We can prove the following inequality:
\begin{equation*}
 m(d_{\lambda+1,\mu-1})-m(d_{\lambda,\mu})>0,  0\leqslant \lambda<s=\lambda+\mu.
\end{equation*}
This inequality will be proved in the coming Proposition.

Now, the sequence $m(d_{\lambda,s-\lambda})|_{\lambda=0,1,\dots,s-1}$ is strictly monotonously increasing. Let $k=s+1$, there exist $k$ 6-dimensional complete intersections $X_6(d_{0,s}), X_6(d_{1,s-1}), \dots, X_6(d_{s,0})$ in $\cp^{8s+6}=\cp^{8k-2}$ with the desired properties. The proof is finished.
\end{proof}
\begin{proposition}\label{Pro-mis}
\begin{equation*}
 m(d_{\lambda+1,s-\lambda-1})-m(d_{\lambda,s-\lambda})>0,  0\leqslant \lambda<s.
\end{equation*}
\end{proposition}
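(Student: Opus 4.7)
My plan is to start by drastically simplifying (\ref{dimM}) using the specific numerology of the entries. Every entry of $\underline{d}$ and $\underline{d}^\prime$ lies in $[1899, 2323]$, and any two entries sum to at least $1899 + 1909 = 3808 > 2323$. Consequently, for any subset $S \subseteq \{1,\dots,r\}$ with $|S| \geq 2$, the integer $d_i - \sum_{k \in S} d_k$ is strictly negative, so the corresponding binomial vanishes by the convention $\binom{m}{N} = 0$ for $m < N$. Only the terms with $|S| \leq 1$ survive, and (\ref{dimM}) collapses to
\[
m(\underline{d}) = 1 - (N+1)^2 + \sum_{i=1}^{r} \binom{N+d_i}{N} - \sum_{\substack{i, k \in \{1,\dots,r\} \\ d_i \geq d_k}} \binom{N + d_i - d_k}{N}.
\]

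Next I would parameterize by the multiplicity structure of $d_{\lambda, s-\lambda}$: each $a \in \underline{d}$ occurs $\lambda$ times, each $a \in \underline{d}^\prime$ occurs $s - \lambda$ times, and $N = 8s+6$ is fixed once $s$ is. Introducing
\[
S_1(Y) := \sum_{a \in Y} \binom{N+a}{N}, \qquad T_{X,Y} := \sum_{\substack{a \in X,\ b \in Y \\ a \geq b}} \binom{N + a - b}{N},
\]
the single-entry sum becomes the linear expression $\lambda S_1(\underline{d}) + (s-\lambda) S_1(\underline{d}^\prime)$, and the double sum becomes the quadratic
\[
\lambda^2 T_{\underline{d}, \underline{d}} + \lambda(s-\lambda)\bigl(T_{\underline{d}, \underline{d}^\prime} + T_{\underline{d}^\prime, \underline{d}}\bigr) + (s-\lambda)^2 T_{\underline{d}^\prime, \underline{d}^\prime}.
\]
Hence $m(d_{\lambda, s-\lambda})$ is a polynomial of degree at most $2$ in $\lambda$, and the first difference $\Delta m(\lambda) := m(d_{\lambda+1, s-\lambda-1}) - m(d_{\lambda, s-\lambda})$ is \emph{linear} in $\lambda$.

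Linearity of $\Delta m$ in $\lambda$ reduces the claim to checking positivity at the two endpoints $\lambda = 0$ and $\lambda = s - 1$: if both are positive, then $\Delta m$ is positive on the whole integer interval $0 \leq \lambda < s$. Each endpoint value is an explicit linear combination of $S_1(\underline{d})$, $S_1(\underline{d}^\prime)$, $T_{\underline{d}, \underline{d}}$, $T_{\underline{d}^\prime, \underline{d}^\prime}$, $T_{\underline{d}, \underline{d}^\prime}$ and $T_{\underline{d}^\prime, \underline{d}}$, with coefficients that are simple polynomials in $s$. The degenerate case $s = 1$ involves only $\lambda = 0$ and corresponds to the inequality $m(\underline{d}) > m(\underline{d}^\prime)$ already verified numerically in the proof of Theorem~\ref{main1}, so only $s \geq 2$ remains.

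The principal obstacle will be the explicit verification of the two endpoint inequalities for $s \geq 2$. The binomials $\binom{N+a}{N}$ with $N = 8s + 6$ and $a$ of order $10^3$ are astronomically large, and positivity rests on subtle cancellations stemming from the coincidence of the power sums $s_1, \dots, s_6$ for $\underline{d}$ and $\underline{d}^\prime$. Expanding each $\binom{N+a}{N}$ as a polynomial of degree $N$ in $a$, every contribution that only involves power sums of order $\leq 6$ must cancel between the two multidegrees, so the sign of each endpoint expression is governed by the surviving higher-order tail. Identifying a dominant positive contribution in that tail and bounding the remainder uniformly in $s$ is where the real computational work lies.
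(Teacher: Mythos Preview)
Your reduction to the $j\le 1$ terms of (\ref{dimM}) is exactly what the paper does: since every entry lies in $[1899,2323]$ and twice the minimum already exceeds the maximum, all $j\ge 2$ terms vanish, and the paper records the resulting expression as (\ref{dimMC}) and the difference as $M_0(\lambda,s)+M_1(\lambda,s)$ in (\ref{6-j=0})--(\ref{6-j=1}), which coincides with your $S_1$/$T$ decomposition. (One small quibble: in the composed multidegree with $\lambda\ge 2$ the value $1899$ occurs at several distinct indices, so the true lower bound for a two-term sum is $2\cdot 1899=3798$, not $1899+1909=3808$; the conclusion is unaffected.)

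Your observation that $m(d_{\lambda,s-\lambda})$ is quadratic in $\lambda$, hence $\Delta m(\lambda)$ is linear in $\lambda$, and that positivity therefore reduces to the two endpoints $\lambda=0$ and $\lambda=s-1$, is correct and is a cleaner reduction than the paper makes explicit. For the remaining verification, however, the paper takes a different route from your proposed power-sum analysis: it simply treats $M_0+M_1$ as a polynomial in the variables $s,\lambda$, evaluates it symbolically in \emph{Mathematica} (referring to the code in \cite{Wang}), prints the exact values for $s=1,2$, and asserts the uniform bound $>10^{65}$ for all $s\ge 3$. Your analytic strategy---expanding $\binom{N+a}{N}$ in $a$ and using the coincidence of $s_1,\dots,s_6$ to annihilate the low-order part---is reasonable in outline, but since $N=8s+6$ grows with $s$, the ``higher-order tail'' you must control has unbounded length, and turning this into a uniform-in-$s$ inequality would in practice still require substantial machine computation. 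In short, the structural reduction is the same in both approaches; the paper finishes by brute-force symbolic computation rather than by the asymptotic argument you sketch.
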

\begin{proof}
For the chosen multidegrees $\underline{d}$ and $\underline{d}^\prime$ in \eqref{6dim},
\begin{align} \label{dimMC}
&  m(d_{\lambda,s-\lambda})=1-(N+1)^2+\Big[\lambda\sum_{d_i\in\underline{d}}+
(s-\lambda)\sum_{d_i\in\underline{d}^\prime}\Big]\binom{N+d_i}{N}\\
& -\Big[\lambda\sum_{d_i\in\underline{d}}+(s-\lambda)\sum_{d_i\in\underline{d}^\prime}\Big]
\hskip -.1cm\sum_{\substack{1\leqslant k\leqslant 8s \\ d_{k}\in d_{\lambda,s-\lambda}}}\hskip -.1cm\binom{N+d_i-d_{k}}{N}, \nonumber
\end{align}
Where, the index $j$ in \eqref{dimM} is maximally $1$ that is determined by $\max(\underline{d},\underline{d}^\prime)=2323$ and $\min(\underline{d},\underline{d}^\prime)=1899$.  So,
\begin{align}\label{Diff6dimM}
&  m(d_{\lambda+1,s-\lambda-1})- m(d_{\lambda,s-\lambda})  \\
= &\Big[\sum_{d_i\in\underline{d}}-\sum_{d_i\in\underline{d}^\prime}\Big]\binom{N+d_i}{N}   \nonumber \\
& +\Bigg\{-\Big[(\lambda+1)\sum_{d_i\in\underline{d}}+(s-\lambda-1)\sum_{d_i\in\underline{d}^\prime}
\Big]\hskip -.2cm
\sum_{\substack{1\leqslant k\leqslant 8s  \\ d_{k}\in d_{\lambda+1,s-\lambda-1}}}  \nonumber\\
& +\Big[\lambda\sum_{d_i\in\underline{d}}+(s-\lambda)\sum_{d_i\in\underline{d}^\prime}\Big]
\hskip -.2cm
\sum_{\substack{1\leqslant k\leqslant 8s \\ d_{k}\in d_{\lambda,s-\lambda}}}\Bigg\} \binom{N+d_i-d_{k}}{N} \nonumber \\
\coloneqq & M_{0}(\lambda,s)+M_{1}(\lambda,s) , \nonumber
\end{align}
Where $M_{0}(\lambda,s), M_{1}(\lambda,s)$ are polynomials of invariants $s$ and $\lambda~ (N=8s+6)$:

\begin{align}\label{6-j=0}
M_{0}(\lambda,s)=& \Big[\sum_{d_i\in\underline{d}}-\sum_{d_i\in\underline{d}^\prime}\Big]\binom{N+d_i}{N},\\
M_{1}(\lambda,s)= &\Big[
-(\lambda+1)\sum_{d_i\in\underline{d}}\sum_{d_{k}\in d_{\lambda+1,s-\lambda-1}}-(s-\lambda-1)\sum_{d_i\in\underline{d}^\prime}\sum_{d_{k}\in d_{\lambda+1,s-\lambda-1}} \nonumber\\
& +\lambda\sum_{d_i\in\underline{d}}\sum_{d_{k}\in d_{\lambda,s-\lambda}}+(s-\lambda)\sum_{d_i\in\underline{d}^\prime}\sum_{d_{k}\in d_{\lambda,s-\lambda}}
\Big]\binom{N+d_i-d_{k}}{N} \nonumber\\
= &\Big[
(-2\lambda-1)\sum_{d_i\in\underline{d}}\sum_{d_{k}\in \underline{d}}+(1+2\lambda-s)\sum_{d_i\in\underline{d}}\sum_{d_{k}\in \underline{d}^\prime}  \label{6-j=1}\\
& +(1+2\lambda-s)\sum_{d_i\in\underline{d}^\prime}\sum_{d_{k}\in \underline{d}}+(2s-2\lambda-1)\sum_{d_i\in\underline{d}^\prime}\sum_{d_{k}\in \underline{d}^\prime}
\Big]\binom{N+d_i-d_{k}}{N}. \nonumber
\end{align}

From the above, we see that \eqref{Diff6dimM} is exactly a polynomial of $s, \lambda$ with complicated coefficients and higher degree. Using the technical computational software  {\bf\emph{Mathematica}}, \eqref{Diff6dimM} can be computed by executable program (see the code listed in \cite[$\S 5$]{Wang}).
Finally, we calculate the following results:
\begin{align*}
& m(d_{1,0})-m(d_{0,1})=\fnum{1778691086255683821703959084138},\\
& m(d_{2,0})-m(d_{1,1})=\fnum{4499576565311886952937393989311636807018493942453},\\
& m(d_{1,1})-m(d_{0,2})=\fnum{4499576565312040117972354794044912596557706541183}.
\end{align*}
More generally,
\begin{equation*}
 m(d_{\lambda+1,s-\lambda-1})-m(d_{\lambda,s-\lambda})>10^{65}, 0\leqslant \lambda <s, s\geqslant 3.
\end{equation*}

Thus, it is clear that, with any fixed $s\geqslant 1, s>\lambda\geqslant 0$, $m(d_{\lambda,s-\lambda})$ form a strictly monotonously increasing sequence for $\lambda$.
\end{proof}

\begin{theorem}\label{main2}
For each integer $k>1$, there exist $k$ diffeomorhic complex $7$-dimensional complete intersections in $\cp^{15k-8}$ belonging in the moduli space to components with different dimensions.
\end{theorem}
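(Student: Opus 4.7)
The plan is to mirror the strategy of Theorem \ref{main1} (and its Proposition \ref{Pro-mis}) in the odd-dimensional setting. The key inputs required are: a pair of multidegrees of suitable length giving diffeomorphic complete intersections after compositon, a diffeomorphism criterion in dimension $7$, and a monotonicity argument for the moduli space dimension.

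First I would exhibit two multidegrees $\underline{d},\underline{d}^\prime\subset\mathbb{N}^{15}$, of length $r=15$, having identical total degree $d$ and identical power sums $s_1,s_2,\dots,s_7$. Such tuples are precisely solutions of the ``Equal Products and Equal Sums of Like Powers'' problem at level $7$, and I would take them from the tables of Chen Shuwen (consistent with the source used in \eqref{6dim}). In choosing them I would further insist that $2\min(\underline{d}\cup\underline{d}^\prime)>\max(\underline{d}\cup\underline{d}^\prime)$, so that in the moduli formula \eqref{dimM} only the indices $j=0,1$ contribute non-trivially, which will keep the final polynomial manageable.

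Second I would pass from equal $s_i$'s to diffeomorphism. Matching $d$ and $s_1,\dots,s_7$ forces the Pontrjagin numbers and Euler characteristic of $X_7(\underline{d})$ and $X_7(\underline{d}^\prime)$ to agree. In the odd complex dimension $7$, I would invoke the diffeomorphism classification result for complete intersections used already in \cite{Wang} for dimension $5$ (a strengthening of \cite[Thm.~1.1]{FW}, which for dimensions with total degree $d$ of the appropriate parity and with all relevant Pontrjagin/Euler invariants coinciding yields diffeomorphism, not just homeomorphism). Then, exactly as in \eqref{ComposedMuldegree}, I would define composed multidegrees
\[
 d_{\lambda,\mu}=(\underbrace{\underline{d},\dots,\underline{d}}_{\lambda},
 \underbrace{\underline{d}^\prime,\dots,\underline{d}^\prime}_{\mu}),\qquad \lambda+\mu=s\geqslant 1,
\]
lying in $\mathbb{C}P^{N}$ with $N=7+15s=15(s+1)-8=15k-8$ when $k=s+1$. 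All $X_7(d_{0,s}),\dots,X_7(d_{s,0})$ have the same $s_1,\dots,s_7$, hence are mutually diffeomorphic.

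Third I would establish the strict monotonicity
\[
 m(d_{\lambda+1,s-\lambda-1})-m(d_{\lambda,s-\lambda})>0,\qquad 0\leqslant \lambda<s.
\]
Using \eqref{dimM} and the observation that only $j=0,1$ survive, this difference expands into two pieces $M_0(\lambda,s)+M_1(\lambda,s)$ of the shape in \eqref{6-j=0}--\eqref{6-j=1}, polynomial in $s,\lambda$ (with $N=15s+7$). I would feed the explicit degrees from step one into the Mathematica code of \cite[\S 5]{Wang}, verify the base cases $m(d_{1,0})-m(d_{0,1})$, $m(d_{2,0})-m(d_{1,1})$, $m(d_{1,1})-m(d_{0,2})>0$ directly, and then obtain a uniform lower bound (say $>10^{K}$ for some explicit $K$) valid for every $0\le\lambda<s$, $s\ge 3$. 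Setting $k=s+1$ then produces $k$ mutually diffeomorphic complete intersections in $\mathbb{C}P^{15k-8}$ whose moduli components have pairwise distinct dimensions.

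The main obstacle I expect is step one combined with the sign control in step three: finding a concrete $(r,\text{matched powers})=(15,7)$ Prouhet--Tarry--Escott style solution that also satisfies the $2\min>\max$ condition, so that the polynomial identity reduces to a tractable monotone expression; once such a pair is fixed, the diffeomorphism invocation and the Mathematica verification are routine, as in the proof of Proposition \ref{Pro-mis}.
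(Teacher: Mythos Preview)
Your plan is essentially the paper's own proof: exhibit a length-$15$ pair with matching $d,s_1,\dots,s_7$ and $2\min>\max$, form the composed multidegrees $d_{\lambda,\mu}$ in $\mathbb{C}P^{15s+7}$, reduce $m(d_{\lambda+1,\mu-1})-m(d_{\lambda,\mu})$ to $M_0+M_1$ as in \eqref{6-j=0}--\eqref{6-j=1}, and verify positivity with \emph{Mathematica}. The paper takes its pair from Guo \cite{Guo} rather than Chen, but otherwise the architecture is identical.

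The one point where your outline is imprecise is the passage from homeomorphism to diffeomorphism. In odd complex dimension $7$ the paper does not use a ``parity'' condition; it invokes Traving's criterion \cite{Tr} (cf.\ \cite[Theorem~A]{Kr}), which requires the total degree $d$ to be divisible by sufficiently high powers of the small primes. The paper therefore records the full factorisation of $d$ and checks this explicitly. This is an \emph{additional} constraint you must impose when selecting $\underline{d},\underline{d}^\prime$ in step one, on top of ``equal products and power sums'' and $2\min>\max$; an arbitrary level-$7$ PTE-type solution need not satisfy it. Once you build that requirement into the search, the rest of your argument goes through exactly as you describe.
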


\begin{proof}
Consider the following two multidegrees(\cite{Guo}\footnote{These multidegrees are founded by Guo Xianqiang using artificial hand calculation.})
\begin{equation}\label{7dim}
\begin{split}
\underline{d} & =(608,592,572,516,500,453,450,424,423,408,396,366,339,312,309), \\
\underline{d}^\prime & =(604,600,564,528,488,456,452,429,416,412,387,375,333,318,306).
\end{split}
\end{equation}
They have the same total degree $d$, powers sums $s_1,\dots,s_{7}$.
\begin{align*}
d & = \fnum{3753247176539885786786848165802803200000},\\
s_1 & =\fnum{6668},\\
s_2 & =\fnum{3094964},\\
s_3 & =\fnum{1495641932},\\
s_4 & =\fnum{749415139508},\\
s_5 & =\fnum{387496273524068},\\
s_6 & =\fnum{205753667680942844},\\
s_7 & =\fnum{111680899229310068732}.
\end{align*}
Then the corresponding complete intersections
\begin{align*}
& X_7(608,592,572,516,500,453,450,424,423,408,396,366,339,312,309), \\
& X_7(604,600,564,528,488,456,452,429,416,412,387,375,333,318,306)
\end{align*}
are homeomorphic by \cite[Theorem 1.1]{FW}. Furthermore
\begin{center}
$d=2^{28}\times 3^{13}\times 5^{5}\times 11^{2}\times 13^2\times 17\times 19\times 37\times 43\times 47\times 53\times 61\times 103\times 113\times 151$,
\end{center}
so they are diffeomorphic by Traving {\cite{Tr}} (see also  \cite[Theorem A]{Kr}). However, they have different moduli space dimensions:
\begin{align*}
m(\underline{d}) & = \fnum{44406795197386326965368167342722355968367},\\
m(\underline{d}^\prime) & = \fnum{44384030917398245056066270542147363962375}.
\end{align*}
Similarly, for the composed multidegrees $d_{0,s},d_{1,s-1},\dots,d_{s,0}$ (cf. \eqref{ComposedMuldegree}), the corresponding complete intersections $X_7(d_{0,s}), X_7(d_{1,s-1}), \dots, X_7(d_{s,0})$ are diffeomorphic to each other.

For the chosen multidegrees $\underline{d}$ and $\underline{d}^\prime$ in \eqref{7dim},
since $\max(\underline{d},\underline{d}^\prime)=608$ and
$\min(\underline{d},\underline{d}^\prime)=306$, then

\begin{align}\label{Diff7dimM}
&  m(d_{\lambda+1,s-\lambda-1})- m(d_{\lambda,s-\lambda})  \\
= &\Big[\sum_{d_i\in\underline{d}}-\sum_{d_i\in\underline{d}^\prime}\Big]\binom{N+d_i}{N}   \nonumber \\
& +\Bigg\{-\Big[(\lambda+1)\sum_{d_i\in\underline{d}}+(s-\lambda-1)\sum_{d_i\in\underline{d}^\prime}
\Big]\hskip -.2cm
\sum_{\substack{1\leqslant k\leqslant 15s  \\ d_{k}\in d_{\lambda+1,s-\lambda-1}}}  \nonumber\\
& +\Big[\lambda\sum_{d_i\in\underline{d}}+(s-\lambda)\sum_{d_i\in\underline{d}^\prime}\Big]
\hskip -.2cm
\sum_{\substack{1\leqslant k\leqslant 15s \\ d_{k}\in d_{\lambda,s-\lambda}}}\Bigg\} \binom{N+d_i-d_{k}}{N} \nonumber \\
\coloneqq & M_{0}(\lambda,s)+M_{1}(\lambda,s) , \nonumber
\end{align}
where $M_{0}(\lambda,s), M_{1}(\lambda,s)$ are same as \eqref{6-j=0},\eqref{6-j=1}.
Finally, we calculate the following results:
\begin{align*}
& m(d_{1,0})-m(d_{0,1})\\
= & \fnum{22764279988081909301896800574992005992},\\
& m(d_{2,0})-m(d_{1,1}) \\
= &\fnum{33455700664468562980578980033713637615501603407170478458745},\\
& m(d_{1,1})-m(d_{0,2}) \\
= & \fnum{33455700663954152769609839164207754699356499185723378335507}.
\end{align*}
More generally,
\begin{equation*}
 m(d_{\lambda+1,s-\lambda-1})-m(d_{\lambda,s-\lambda})>3\times 10^{76}, 0\leqslant \lambda <s, s\geqslant 3.
\end{equation*}

Thus, the sequence $m(d_{\lambda,s-\lambda})|_{\lambda=0,1,\dots,s-1}$ is strictly monotonously increasing. Let $k=s+1$, there exist $k$ $7$-dimensional complete intersections $X_7(d_{0,s}), X_7(d_{1,s-1}), \dots, X_7(d_{s,0})$ in $\cp^{15s+7}=\cp^{15k-8}$ with the desired properties.
\end{proof}


\begin{thebibliography}{00}
\bigskip
\bibitem{B} Br{\"u}ckmann, P.:
A remark on moduli spaces of complete intersections. J. Reine Angew. Math. {\bf 476} (1996), 209--215.
\bibitem{B00}{Br{\"u}ckmann, P.}: A remark on moduli spaces of 4-dimensional complete  intersections.
J. Reine Angew. Math. {\bf 525} (2000), 213--217.
\bibitem{Chen} Chen, S.W.: Equal products and equal sums of like powers. \url{http://euler.free.fr/eslp/product.htm}.
\bibitem{FW} Fang, F. Q., Wang, J. B.: Homeomorphism classification of complex projective
complete intersections of dimensions 5,6 and 7. Math. Z. {\bf 266} (2010),
7919--746.
\bibitem{Guo} Guo, X.Q.: \url{http://bbs.emath.ac.cn/thread-5853-1-1.html}(in Chinese).
\bibitem{Kr} Kreck, M.: Surgery and duality. Ann. of
Math. {\bf 149}(3) (1999), 707--754.
\bibitem{LW82}Libgober, A. S., Wood, J. W.: Differentiable structures on
complete intersections, I. Topology. {\bf 21} (1982), 469--482.
\bibitem{LW86} Libgober, A. S., Wood, J. W.:
Remarks on moduli spaces of complete intersections. Contemp. Math. {\bf 58} (1986), 183--194.
\bibitem{Tr} Traving, C.: Klassification vollst$\ddot{a}$ndiger
Durchschnitte. Diplomarbeit, Mainz (1985). Available at
\url{http://www.mfo.de/Staff/traving.pdf}.
\bibitem{Wang} Wang, J. B.: Remarks on 5-dimensional complete intersections.
Electron. Res. Announc. Math. Sci. {\bf 21} (2014), 28--40.

\end{thebibliography}
\end{document}